\newcommand{\PP}{\mathbb P}
\newcommand{\CC}{{\mathbb C}}
\newcommand{\NN}{{\mathbb N}}
\newcommand{\ZZ}{{\mathbb Z}}
\newcommand{\QQ}{{\mathbb Q}}
\newcommand{\KK}{{k}}
\newcommand{\NS}{\mathrm{NS}}
\newcommand{\MWL}{\mathrm{MWL}}
\newcommand{\Triv}{\mathrm{Triv}}
\newcommand{\Pic}{\mathrm{Pic}}
\newcommand{\tD}{2h}
\newcommand{\jD}{h}
\numberwithin{equation}{section}
\begin{document}

\title{Miyaoka's bound for conics on K3 surfaces and beyond}

\author{S\l awomir Rams}
\address{Institute of Mathematics, Jagiellonian University, 
ul. {\L}ojasiewicza 6,  30-348 Krak\'ow, Poland}
\email{slawomir.rams@uj.edu.pl}

\author{Matthias Sch\"utt}
\address{Institut f\"ur Algebraische Geometrie, Leibniz Universit\"at
  Hannover, Welfengarten 1, 30167 Hannover, Germany}

    \address{Riemann Center for Geometry and Physics, Leibniz Universit\"at
  Hannover, Appelstrasse 2, 30167 Hannover, Germany}

\email{schuett@math.uni-hannover.de}

\thanks{%
This research (S{\l}awomir Rams) was funded in part by
 the National Science Centre, Poland, Opus  grant
	no.\ 2024/53/B/ST1/01413.
The second author's research is conducted in the framework of the research training
group GRK 2965: From Geometry to Numbers,
funded by DFG}

\dedicatory{ Dedicated to Tetsuji Shioda on the occasion of his 85th birthday}


\begin{abstract}
We show that Miyaoka's bound for the number of conics on a degree-$2h$ 
K3 surface is attained for  high $h$,
and analogously for higher even degree (smooth) rational curves.
\end{abstract}

\maketitle

\newcommand{\XXd}{X_{d}}
\newcommand{\XXf}{X_{4}}
\newcommand{\XXp}{X_{5}}
\newcommand{\mF}{\mathcal F}
\newcommand{\MW}{\mathop{\mathrm{MW}}}
\newcommand{\mL}{\mathcal L}
\newcommand{\mR}{\mathcal R}
\newcommand{\Ruledeight}{S_{11}}
\newcommand{\Ruledfour}{S_{4}}
\newcommand{\DivisorRest}{\mathfrak Rest}
\newcommand{\Pl}{\Pi}
\newcommand{\reg}{\operatorname{reg}}

\newcommand{\IK}{{{\rm I}}}
\newcommand{\II}{{\mathop{\rm II}}}
\newcommand{\III}{{\mathop{\rm III}}}
\newcommand{\IV}{{\mathop{\rm IV}}}
\newcommand{\rr}{\mathfrak r}

\theoremstyle{remark}
\newtheorem{obs}{Observation}[section]
\newtheorem{rem}[obs]{Remark}
\newtheorem{example}[obs]{Example}
\newtheorem{ex}[obs]{Example}
\newtheorem{conv}[obs]{Convention}
\theoremstyle{definition}
\newtheorem{Definition}[obs]{Definition}
\theoremstyle{plain}
\newtheorem{prop}[obs]{Proposition}
\newtheorem{theo}[obs]{Theorem}
\newtheorem{Theorem}[obs]{Theorem}
\newtheorem{lemm}[obs]{Lemma}
\newtheorem{crit}[obs]{Criterion}
\newtheorem{claim}[obs]{Claim}
\newtheorem{Fact}[obs]{Fact}
\newtheorem{cor}[obs]{Corollary}
\newtheorem{assumption}[obs]{Assumption}
\newtheorem{conclusion}[obs]{Conclusion}
\newtheorem{summary}[obs]{Summary}

\newcommand{\ux}{\underline{x}}
\newcommand{\ud}{\underline{d}}
\newcommand{\ue}{\underline{e}}
\newcommand{\mmS}{{\mathcal S}}
\newcommand{\mmP}{{\mathcal P}}
\newcommand{\nlines}{\mbox{\texttt l}(\XXp)}
\newcommand{\ii}{\operatorname{i}}

\newcommand{\nonlinflec}{{\mathcal D}}
\newcommand{\linflec}{{\mathcal L}}
\newcommand{\flec}{{\mathcal F}}

\newcommand{\kfield}{{\KK}}


\section{Introduction}

Rational curves on algebraic surfaces
form a classical topic that has sparked great interest in the realm of enumerative geometry. Although configurations of such curves (especially lines and conics)
on various classes of surfaces have been studied for over 150 years, it was only
within the last decade that most  questions concerning line configurations on polarized  K3-surfaces over various fields have been answered (see  e.g. \cite{RS-64}, \cite{DIS}, \cite{Shimada56}, \cite{RS-112}, \cite{Miyaoka}, \cite{degt}, \cite{degt-tritangents} \cite{DR-2023}). Far less is known about the behaviour of large configurations of conics 
(see e.g. \cite{bauer-skew}, \cite{Miyaoka}, \cite{degt-800}, \cite{sarti-m20}, \cite{degt-nagoya}).
 
For a fixed  algebraically  closed field $\kfield$,  
integers  $d \geq 1$ and $h \geq 2$ let us define 
$$
\rr_{\kfield}(h,d) := \max \{ r_d(X) :  X \mbox{ is a 
	degree-}2h \mbox{ smooth K3 surface over } \kfield\}
$$
where 
 $r_d(X)$ is the number of degree-$d$ rational curves on the 
  surface $X$
 (see \cite{Miyaoka}). 
 As a consequence of the orbibundle Miyaoka-Yau-Sakai inequality \cite{Miyaoka-orbi} Miyaoka obtained (see \cite[Proposition A]{Miyaoka}) the following inequality 
\begin{equation} \label{eq-miyaoka}
 \rr_{\CC}(h,d) \leq \frac{24 h}{h - 2 d^2} \quad \mbox{  for  } h > 2 d^2 \, ,
 \end{equation}
and asked  to what extent this bound is sharp  (see \cite[p.~920]{Miyaoka}). 

Obviously, \eqref{eq-miyaoka} implies the inequality $ \rr_{\CC}(h,d)  \leq 24$  for $h > 50 d^2$. 
Focussing on lines, i.e.\ $d=1$, 
 the function $h \mapsto \rr_{\CC}(h,1)$ becomes periodic for large $h$ by 
\cite[Theorem~1.5]{degt},  and in fact,
   $$
   \rr_{\CC}(h,1) \in \{21, 22, 24\} \quad \mbox{for } h \gg 0.
   $$
(For precise arithmetic conditions  on the degree $2h$ of the polarization
for Miyaoka's bound of 24 lines to be attained, see \cite[Theorem~1.5]{degt}.) Consequently, 
\eqref{eq-miyaoka} fails to be sharp for lines on complex degree-$2h$ K3-surfaces for infinitely many $h$. On the other hand, by \cite[Theorem~1.1]{RS-24},
we have
$$
 \rr_{\kfield}(h,d) = 24 \quad \mbox{ for }   d \geq 3, \, \,  h > 42d^2 \mbox{ and } \mbox{char}( \KK) \neq 2,3
$$
(where the bound even holds if we consider all rational curves of degree $\leq d$ altogether).
Unfortunately, the main construction from \cite{RS-24} yields  K3 surfaces  with nodal rational curves, so it cannot be applied for 
conics. 
The  aim of this paper is to prove that
there are no arithmetic conditions on the degree  of the polarization
for Miyaoka's bound of at most 24 rational curves on degree-$2h$ K3 surfaces to be attained
by even degree {\sl smooth} rational curves (once $h$ is large enough), 
as captured in our main theorem:
\begin{theo}
\label{thm}
Let $d, h\in\ZZ_{>1}$ and let $k$ be an algebraically closed field of characteristic $p\geq 0$, $p\neq 2,3$. 
There is a degree-$2h$ K3 surface $X$ over $k$ containing
 24 smooth degree-$d$ rational curves if 
\begin{enumerate}
\item
$h\geq 2d(2d+3)-2$ and
\item
$d$ or $h$ is even. 
\end{enumerate}
\end{theo}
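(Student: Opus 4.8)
The plan is to realize $X$ as an elliptic K3 surface and to exhibit the $24$ curves as sections of a suitable elliptic fibration $\pi\colon X\to\PP^1$. Since every section of an elliptic fibration is a smooth rational curve (a $(-2)$-curve isomorphic to $\PP^1$), this automatically guarantees smoothness and circumvents the nodal output of the construction in \cite{RS-24}. Writing $O$ for the zero section, $F$ for a fiber, and setting $e=O+F$, $f=F$ (so $\langle e,f\rangle$ is a hyperbolic plane), Shioda's Mordell--Weil lattice formalism writes the class of a section $P$ as $\bar P=e+\tfrac{h(P)-2}{2}f+\phi(P)$ modulo components of reducible fibers, where $\phi(P)$ lies in the negative definite Mordell--Weil part of $\NS(X)\otimes\QQ$ and the height is $h(P)=-\phi(P)^2=4+2(\bar P\cdot O)-\sum_v\mathrm{contr}_v(P)$. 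I would choose the fibration so that its Mordell--Weil lattice contains $24$ sections $P_1,\dots,P_{24}$ of one fixed minimal height whose vectors $\phi(P_i)$ lie on a common affine hyperplane, i.e.\ have a constant pairing with some fixed class $H_E$.

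Next I would build the polarization as $H=\alpha\,e+\beta\,f+H_E$ with $\alpha,\beta\in\ZZ_{>0}$ and $H_E$ in the Mordell--Weil part. A direct computation gives $H\cdot\bar P=\beta-\alpha-\tfrac\alpha2\phi(P)^2+H_E\cdot\phi(P)$, a single \emph{positive definite} quadratic function of $\phi(P)$ (positive definite because $\phi(P)^2<0$ and $\alpha>0$), while $H^2=2\alpha\beta+H_E^2$. Hence the $24$ chosen sections acquire the same degree as soon as $\alpha+\beta$ and the common pairing $H_E\cdot\phi(P_i)$ are fixed. Imposing $H\cdot\bar P_i=d$ and $H^2=2h$ turns into a system relating $\alpha,\beta$, the norm $H_E^2$ and the common pairing; solving it over $\ZZ$ (so that $H\in\NS(X)$ and the auxiliary negative definite lattice hosting the $\phi(P_i)$ is even) is what produces both the size bound $h\ge 2d(2d+3)-2$ and the alternative that $d$ or $h$ be even. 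I expect the parity dichotomy to come out of the congruence obstructions to solving this system integrally, and the inequality from the positivity constraints below.

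It then remains to check that $H$ is ample and that the configuration is geometrically realized. On a K3 surface, ampleness of a class with $H^2>0$ is equivalent to positivity against every $(-2)$-curve; when the only $(-2)$-curves are the sections (together with components of any reducible fibers), this reduces to strict positivity of the degree function on the \emph{entire} Mordell--Weil lattice. As that function is the positive definite quadratic above, its minimum is attained at an explicit point, and the requirement that this minimum be positive is exactly the inequality forcing $h$ past the threshold $2d(2d+3)-2$. Finally I would realize the resulting abstract even hyperbolic lattice of rank $\le 20$ as $\NS$ of an honest K3: in characteristic $0$ via surjectivity of the period map and the Torelli theorem, and in characteristic $p\neq 2,3$ either by reduction from a characteristic-$0$ model or by the corresponding existence statements for the Néron--Severi lattice, giving an explicit Weierstrass equation where possible.

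The main obstacle is the interplay in this last step. One must keep $\NS(X)$ of rank at most $20$ (so the $24$ curves are forced to satisfy several linear relations among the $\phi(P_i)$), make the designated sections have degree \emph{exactly} $d$, and keep $H$ ample, all simultaneously; balancing the rank constraint against positivity of the degree quadratic is what pins down the sharp bound. Controlling \emph{all} $(-2)$-curves, not only the $24$ distinguished ones, in order to certify ampleness is the most delicate point, and the passage to characteristic $p\neq 2,3$ adds the further difficulty of realizing the lattice without the analytic Torelli theorem.
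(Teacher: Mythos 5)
Your plan replaces the paper's 24 curves by 24 sections of equal degree, but the decisive content of the theorem is exactly the part you leave as an ``expectation''. In the paper the 24 curves are not sections at all: they are the components $\Theta_{i,j}$ of twelve $I_2$ fibres of $y^2=x(x-f)(x-g)$, and the equality of their degrees is automatic by torsion symmetry, since $H=NF+\frac d2(O+P_1+P_2+P_3)$ satisfies $H\cdot\Theta_{i,j}=d$ for all $24$ components simultaneously (for even $d$). This pins $H^2=4Nd$, i.e.\ one residue class mod $4d$, and the remaining classes are swept by perturbing $H'=H+D$ with $D=\varphi(Q)$ a vector in the \emph{narrow} Mordell--Weil lattice with $-D^2\in\{6,8,\dots,4d+4\}$, so that $(Q.O)=-D^2/2-2\leq 2d$ gives uniform control of all intersection numbers in Saint-Donat's criterion; the threshold $h\geq 2d(2d+3)-2$ and the parity dichotomy fall out of this explicit bookkeeping (odd $d$ needs a separate enhancement by a height-$2$ section meeting all $I_2$ fibres at non-identity components, and the resulting $T(W)=U(2)^2\oplus A_3(2)$ only represents multiples of $4$, which is precisely why $h$ must be even there). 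Your proposal has no identified mechanism producing, for \emph{every} admissible pair $(d,h)$, a K3 whose Mordell--Weil lattice contains 24 sections of one height with constant pairing against $H_E$, nor any computation showing the congruence obstructions reproduce the stated parity condition rather than some other one; as it stands the arithmetic heart of the theorem is asserted, not proved. Note also an internal tension: to keep 24 section degrees fixed while $h\to\infty$ you are forced to height-$2$ sections, and $h(P)=4+2(P.O)-\sum_v\mathrm{contr}_v(P)$ with $P.O\geq 0$ then \emph{requires} nonzero correction terms, hence reducible fibres --- so your simplifying hypothesis that ``the only $(-2)$-curves are the sections'' cannot hold, and fibre components and multisections re-enter the positivity check.

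Two further gaps are genuine rather than cosmetic. First, you certify only ampleness (positivity against $(-2)$-curves), but the theorem needs a smooth degree-$2h$ surface in $\PP^{h+1}$, i.e.\ \emph{very} ampleness: you must also verify $H\cdot E>2$ for every irreducible curve of arithmetic genus $1$ --- all elliptic pencils on $X$, not just your chosen fibration --- and exclude $2$-divisibility when $H^2=8$; this is why the paper runs everything through Saint-Donat's criterion, with item (7) of Step 4 handling multisections. Second, in characteristic $p>0$ your fallback ``reduction from a characteristic-$0$ model'' is precisely the subtlety the paper flags: the relevant Noether--Lefschetz loci need not be non-empty, and specialization can degenerate the fibre configuration. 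The paper circumvents this with explicit surfaces: the Shioda--Inose/Kuwata surface $y^2=x^3-3t^4x+t^{12}+1$ with $T(X)=\mathrm{diag}(12,12)$, whose narrow Mordell--Weil lattice $A_1^-(2)^2\oplus A_2^-(2)^2$ represents every multiple of $4$ (even $h$), and a Kummer-derived family whose rank-$4$ narrow sublattice has Gram matrix $A$ with $A/2$ representing every integer except $1$ (odd $h$, even $d$). Without a substitute for these representability arguments and explicit sections, your outline does not close in positive characteristic.
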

Note that both for even and odd integer $d$ Theorem~\ref{thm} implies that
Miyaoka's bound of at most 24 rational curves of a given degree $d$ 
on a smooth degree-$2h$ K3 surface
is attained by configurations of smooth rational degree-$d$ curves for infinitely many values of $h$.
For brevity, we only make this explicit for conics:

\begin{cor}
	\label{cor} Let  $h > 168$ and let $k$ be an algebraically closed field of characteristic $p\geq 0$, $p\neq 2,3$. 
	Then the maximal number of conics on a smooth degree-$2h$ K3 surface over $k$ is $24$ (i.e. $\rr_{\kfield}(h,2) = 24$).  \\
	In particular 
	Miyaoka's bound \eqref{eq-miyaoka} for the number of conics on a degree-$2h$ complex K3 surface is sharp for any $h>200$.
\end{cor}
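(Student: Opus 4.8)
The plan is to combine Theorem~\ref{thm} (for the lower bound) with Miyaoka's inequality~\eqref{eq-miyaoka} and the characteristic-free upper bound underlying \cite{RS-24} (for the matching upper bound), specialising throughout to $d=2$.

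First I would establish the lower bound $\rr_\kfield(h,2)\geq 24$. For $d=2$ the numerical hypothesis~(1) of Theorem~\ref{thm} reads $h\geq 2\cdot 2\cdot(2\cdot 2+3)-2=26$, while hypothesis~(2) holds automatically because $d=2$ is even. Hence for every $h>168$ (indeed already for every $h\geq 26$) Theorem~\ref{thm} produces a smooth degree-$2h$ K3 surface over $k$ carrying $24$ smooth conics, so $\rr_\kfield(h,2)\geq 24$.

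The crux is the matching upper bound $\rr_\kfield(h,2)\leq 24$ valid in every characteristic $p\neq 2,3$. Miyaoka's inequality~\eqref{eq-miyaoka} alone does not suffice here: it is formulated over $\CC$, and for $d=2$ it only gives $\rr_\CC(h,2)\leq \tfrac{24h}{h-8}$, which is $<25$ only once $h>200$. Instead I would invoke the upper-bound half of \cite[Theorem~1.1]{RS-24}, a lattice-theoretic, characteristic-free estimate that bounds the number of smooth rational curves of degree $\leq d$ by $24$ as soon as $h>42d^2$; for $d=2$ this threshold is exactly $42\cdot 2^2=168$. Although the \emph{equality} in \cite{RS-24} is asserted only for $d\geq 3$ — their lower-bound construction yields nodal rational curves and so cannot realise conics — that obstruction lies entirely on the construction side. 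The main point to verify, and the step I expect to be the real obstacle, is precisely this: that the counting argument of \cite{RS-24} uses nothing about $d\geq 3$ and remains valid at $d=2$, $h>168$. Granting this, combining with the previous paragraph yields $\rr_\kfield(h,2)=24$ for all $h>168$.

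Finally I would deduce the sharpness of Miyaoka's bound over $\CC$. For $d=2$ the right-hand side of~\eqref{eq-miyaoka} equals $\tfrac{24h}{h-8}$, and a direct computation shows $24<\tfrac{24h}{h-8}<25$ exactly for $h>200$, so that $\bigl\lfloor \tfrac{24h}{h-8}\bigr\rfloor=24$ for such $h$. Since we have just shown this value $24$ to be attained, Miyaoka's bound~\eqref{eq-miyaoka} is sharp for every complex K3 surface of degree $2h$ with $h>200$, which is the asserted "in particular".
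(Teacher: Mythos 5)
Your proposal is correct and takes essentially the same route as the paper, which likewise combines Theorem~\ref{thm} (applicable for $d=2$ since $d$ is even and $h>168\geq 26$) with the characteristic-free upper bound \cite[Theorem~1.1.1]{RS-24} valid for $h>42d^2=168$, together with the elementary computation that $24<\frac{24h}{h-8}<25$ precisely for $h>200$. The one step you flag as a potential obstacle is in fact unproblematic: the restriction $d\geq 3$ in \cite{RS-24} pertains only to the sharpness (construction) half of that theorem, while the upper-bound statement the paper cites holds at $d=2$, which is exactly why the present paper only needed to supply the conic construction.
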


\noindent
Indeed,  Theorem~\ref{thm} 
combined with \cite[Theorem~1.1.1]{RS-24} yields the equalities
$$
\rr_{\kfield}(h,2) = 24 \quad \mbox{ for } \, \,  h > 168 \mbox{ and } \mbox{char}( \KK) \neq 2,3 \, ,
$$
which completes the picture of the behaviour of large configurations of low-degree rational curves on high-degree K3 surfaces (at least in complex case - recall that \cite{degt} is devoted to lines on {\sl complex} K3 surfaces).

It is natural to ask whether the assumption that $h$ is even in 
Theorem~\ref{thm}.(2) can be weakened. We discuss briefly this question in 
$\S$.\ref{sec-finalremarks}, but avoid lengthy arguments in order to keep this note reasonably compact and independent of
 large computer-aided arguments. Finally, let us recall that 
\eqref{eq-miyaoka} fails over fields of characteristic $p=2,3$ (see \ref{ss:2,3} or \cite[Theorems~1.2, 1.3]{RS-24}).

\begin{conv}
We assume that the base field $k$ of characteristic $p\geq 0$
is algebraically closed.
Throughout this paper, all root lattices 
are assumed to be negative-definite,
and rational curves  are assumed to be irreducible.
\end{conv}

\section{Preliminaries}
\label{s:smooth}

As in \cite{Miyaoka}, \cite{degt}, \cite{RS-24} 
we consider pairs $(X,H)$ where $X$ is a smooth K3 surface  
over an algebraically closed field $k$ of characteristic $p$, and  $H$ 
is a very ample divisor with $H^2=\tD$. Obviously
the linear system $|H|$ defines an embedding
\[
X \hookrightarrow \PP^{\jD+1}
\]
Since its image is a smooth degree-$2h$ 
surface, we call  $(X,H)$  a polarized  degree-$2h$ K3 surface
(and simply write $X$ instead of  $(X,H)$ whenever it leads to no ambiguity).
In order to prove that a given  $H$ is very ample  we will use the following criterion from \cite{S-D}:

\begin{crit}
	\label{crit}
	Let $p \neq 2$ and let  $H$ be 
	a divisor on a K3 surface $X$. 
	If
	\begin{enumerate}
		\item
		$H.C > 0$ for every curve $C\subset X$; 
		\item
		$H.E>2$ for every irreducible curve $E\subset X$ of arithmetic genus $1$;
		\item
		$H^2\geq 4$, and for $H^2=8$, the divisor  
		$H$ is not $2$-divisible in $\Pic(X)$,
	\end{enumerate}
	then $H$ is very ample. 
\end{crit}

\section{Complex case, even $d$}
\label{s:even_d}

Here we prove Theorem~\ref{thm} for even $d$ and $k = \CC$. The restriction on the base field  plays an important role in the second step of the proof (c.f. $\S$~\ref{sec-step2}).

\begin{prop}
\label{thm:even_d_over_C}
For even $d$ and every   $h\geq 2d(2d+3)-2$
there exists a complex projective degree-$2h$ $K3$-surface with exactly $24$ smooth degree-$d$ rational curves. 
\end{prop}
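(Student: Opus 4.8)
The plan is to construct an explicit elliptic K3 surface whose Mordell–Weil and trivial lattices are engineered to produce exactly 24 smooth rational curves of the prescribed degree, and then to choose a polarization $H$ with $H^2 = 2h$ realizing the degree-$d$ condition. The most natural source of 24 curves is an elliptic fibration $X \to \PP^1$ with a section $O$ and Mordell–Weil rank high enough that the sections, or suitable combinations of sections and fiber components, give 24 smooth rational curves. I would start from a Weierstrass model with a prescribed configuration of reducible fibers (say, enough $\IK_0^*$ or $\II$-type fibers to kill most of the Mordell–Weil rank is the \emph{wrong} direction, so instead I would aim for a fibration with large torsion-free Mordell–Weil rank), so that the 24 curves arise as the zero section together with $23$ further sections, or as a full $24$-element orbit under the Mordell–Weil group and automorphisms.

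The key steps, in order, are as follows. First I would fix a concrete elliptic K3 surface $X$ over $\CC$ realizing a chosen Néron–Severi lattice $\NS(X)$ in which the class $F$ of the fiber, the section $O$, and the sections generating $\MWL(X)$ all sit explicitly; the Shioda–Tate formula then pins down the admissible combinations of singular fibers and Mordell–Weil rank summing to $\rho(X) \le 20$. Second, I would identify $24$ smooth rational curves $C_1, \dots, C_{24}$ as the images of sections (each section of an elliptic K3 is a smooth rational curve with $C_i^2 = -2$), verifying via intersection numbers in $\NS(X)$ that they are irreducible and smooth. Third, and this is the heart of the degree constraint, I would seek a polarization $H$ with $H^2 = 2h$ and $H \cdot C_i = d$ for all $i$; the natural ansatz is $H = aF + bO + (\text{correction from sections})$, and solving $H^2 = 2h$, $H\cdot C_i = d$ forces the arithmetic relation behind the bound $h \ge 2d(2d+3) - 2$. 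Finally I would invoke Criterion~\ref{crit} to confirm that this $H$ is very ample, checking the three numerical conditions on curves, genus-one curves, and $2$-divisibility.

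The main obstacle I expect is the simultaneous satisfaction of two competing demands: the intersection-theoretic data $H \cdot C_i = d$ with $C_i^2 = -2$ must be compatible with $H$ being very ample and with $H^2 = 2h$ taking \emph{every} sufficiently large value, not merely the values in one congruence class. This is precisely where the parity hypothesis $d$ or $h$ even enters: for even $d$ the quadratic form governing $H^2$ modulo small integers behaves favorably, whereas odd $d$ with odd $h$ would obstruct the existence of an integral solution. I anticipate that the bound $h \ge 2d(2d+3) - 2$ is exactly the threshold beyond which the relevant lattice embedding problem (finding $H$ in the span of $F$, $O$, and the $C_i$ with the prescribed norm and intersection numbers) always admits a very ample solution. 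The even-$d$, complex case treated in Proposition~\ref{thm:even_d_over_C} should be the cleanest instance, since over $\CC$ one has the full Torelli-type freedom to realize the desired transcendental lattice, and the referenced second step of the proof ($\S$~\ref{sec-step2}) presumably exploits exactly this complex-analytic surjectivity of the period map to guarantee existence of $X$ with the engineered $\NS(X)$.
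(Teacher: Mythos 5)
Your proposal runs in the opposite direction from a workable construction, and the choice you explicitly dismiss is the one that makes the proof go through. You propose to realize the 24 curves as \emph{sections} of a fibration with large Mordell--Weil rank, calling the reducible-fibre route ``the wrong direction.'' The paper's proof does exactly the reverse: it takes the extended Weierstrass form $y^2=x(x-f)(x-g)$ with full $2$-torsion, $\MW(X)\cong(\ZZ/2\ZZ)^2$ generically (rank zero!), and twelve fibres of type $\IK_2$; the 24 smooth rational curves are the 24 \emph{fibre components} $\Theta_{i,j}$, and since $(O+P_1+P_2+P_3).\Theta_{i,j}=2$, the divisor $H=NF+\frac d2(O+P_1+P_2+P_3)$ (integral precisely because $d$ is even --- this, not a vague favorable behaviour of the quadratic form, is where the parity of $d$ enters) gives $H.\Theta_{i,j}=d$ uniformly, with sections and multisections automatically of much larger degree, so exactness of the count is essentially free. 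In your sections-based variant none of this is addressed: an infinite Mordell--Weil group forces you to show that \emph{exactly} 24 sections attain degree $d$ (a minimal-vector/kissing-number problem you never formulate), that no fibre component or smooth rational multisection also has degree $d$, and that a single symmetric $H$ gives all 24 sections equal degree; no candidate lattice or fibration is exhibited.

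The second, and decisive, gap is the mechanism for hitting \emph{every} $h\geq 2d(2d+3)-2$ rather than one congruence class. The fibre-component polarization only yields $H^2=4Nd\equiv 0\bmod 4d$. The paper covers the remaining $2d-1$ even residue classes by a lattice enhancement: since $T(X)\cong U(2)^2\oplus A_1^4$ represents every even integer, one deforms $X$ to a K3 surface $W$ with $\NS(W)=\overline{\NS(X)\oplus\ZZ D}$ for a primitive $D\in T(X)$ with $-D^2\in\{6,8,\hdots,4d+4\}$; for $D^2<-4$ the twelve $\IK_2$ fibres survive, $D=\varphi(Q)=Q-O-((Q.O)+2)F$ for a section $Q$ in the narrow Mordell--Weil lattice, and $H'=H+D$ is checked to be very ample via Criterion~\ref{crit} once $N>2d+3$ --- this interplay of the range of $D^2$ with the very-ampleness threshold is exactly what produces the bound $h\geq 2d(2d+3)-2$. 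Your appeal to ``complex-analytic surjectivity of the period map'' correctly anticipates that a Noether--Lefschetz deformation is involved, but without computing what the transcendental lattice represents, how the new class becomes a section, and how it perturbs $H^2$ while preserving the configuration of the 24 curves, the claim that every sufficiently large $h$ is attained remains unsupported. As it stands, the proposal is a plausible-sounding plan whose central construction is unsubstantiated and whose residue-class-covering step is missing.
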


\begin{proof}
We proceed in several steps.

\subsection{Step 1} 

We start by following closely the construction from \cite[\S 11.3]{RS-24}.
Assume that char$(k)\neq 2$ and consider squarefree polynomials $f,g\in k[t]$
of degree $4$ which are relatively prime such that $f-g$ is also squarefree of the same degree.
(One of the three polynomials could also be allowed to have degree $3$,
accounting for a zero at $\infty$.)
Then the extended Weierstrass form 
\[
y^2 = x(x-f)(x-g)
\]
defines an elliptic K3 surface $X$ over $\PP^1_t$
with twelve singular fibres of type $\IK_2$ at the zeroes of $f, g$ and $f-g$.
Generically, one has $\MW(X)\cong(\ZZ/2\ZZ)^2$
with disjoint sections 
\[
P_1 = (0,0), \;\;\; P_2 =  (0,f),\;\;\; P_3 =  (0,g)
\]
and $O$ the point at $\infty$.

Using Criterion \ref{crit}, one finds:

\begin{Fact} \cite[Fact 11.3]{RS-24}
Assume that $d$ is even.
Let $F$ denote a fibre and $N>d$. Then
$H=NF+\frac{d}{2} (O+P_1+P_2+P_3)$ is very ample.
\end{Fact}

We put $\Theta_{0,j} + \Theta_{1,j}$ for $j=1, \ldots, 12$ to denote the twelve  singular fibers of the type  $\IK_2$. Then
since $(O+P_1+P_2+P_3).\Theta_{i,j} = 2$ for every $i,j$ we have
$$
H.\Theta_{i,j} = d \;\; \mbox{ for every } \; i,j
$$

Therefore, any general choice of $X$ with non-degenerate singular fibre configuration admits
 degree-$\tD$ models for $h=d(2N-d)$
 with 24 smooth  degree-$d$ rational curves
in characteristic $\neq 2$.
 Since $d$ is even, we derive the congruence 
 \begin{equation} \label{eq-modular4d}
 H^2\equiv 2d^2 \equiv 0 \mod 4d.
 \end{equation}
 In particular, we see that there are $2d-1$ residue classes $H^2$ modulo $4d$ left to be covered.

\subsection{Step 2}  \label{sec-step2}

In this subsection, we restrict to  the complex numbers,
for otherwise the Noether--Lefschetz loci in consideration need not be non-empty,
but we will see in Section \ref{s:p>0}
how to overcome this in positive characteristic.

Consider the transcendental lattice $T(X)$
of a very general choice of $X$. 
By assumption, this has signature $(2,6)$, and in fact, it takes the shape
\begin{eqnarray}
\label{eq:T(X)}
T(X)
\cong
U(2)^2 \oplus A_1^4.
\end{eqnarray}
To see this, note that $\NS(X)$ is an index 4  overlattice of the trivial lattice $\Triv(X)=U\oplus A_1^{12}$
generated by zero section $O$ and fibre components.
As this is 2-elementary, so is $\NS(X)$ and thus also $T(X)$.
But then the claim follows from \cite{nikulin} by the  elementary verification 
that the discriminant form assumes non-integer values.

In the sequel, we shall use the fact that $T(X)$ represents every even integer by \eqref{eq:T(X)}.
This implies that, for any even $r<0$, we can choose a primitive vector $D \in T$ with $D^2 =r$. 
By  moduli theory for $K3$ surfaces there exists a K3 surface $W$
(deforming in a $5$-dimensional family)
 such that for its transcendental lattice we have
$$
T(W) \cong D^{\perp_{T(X)}}.
$$  
By definition, $W$ inherits the structure of elliptic surface from $X$.
In detail, we have the primitive closure
\[
\NS(W) = \overline{\NS(X) \oplus \ZZ D} \subset H^2(W,\ZZ),
\]
exhibiting $\NS(W)$ as an index 2 overlattice because $T(X)$ is 2-divisible as an integral lattice (not even!),
so any primitive vector $v\in T(X)$ has $v/2\in T(X)^\vee$ which glues to some element in $\NS(X)^\vee$.
The embedding of the hyperbolic plane $U\hookrightarrow \NS(X)$ corresponding to 
our fixed elliptic fibration on $X$ induces the claimed fibration (with section) on $W$.
Its singular fibres are encoded in  the roots of $\overline{A_1^{12} \oplus \ZZ D}$.
One directly verifies that this root lattice continues to be $A_1^{12}$ unless $D^2=-2$ 
(where $A_1^3\oplus \ZZ D$ may have $D_4$ as an overlattice)
or $D^2=-4$ (where  $A_1^2\oplus \ZZ D$ may have $A_3$ as an overlattice).
Under the condition that $D^2<-4$,
we thus obtain that all singular fibres of the fibration on $W$ have type $\IK_2$.
Hence
$$
\Triv(W) = \Triv(X) \;\;
\; \text{ and } \;\;\;
D \in \Triv(W)^{\perp}\subset\NS(W).
$$
It follows from the theory of Mordell--Weil lattices \cite{MWL} that there is a section $Q\in\MW(W)$ 
meeting each fibre in the identity component (met by $O$) such that
$D = \varphi(Q)$  where  	$\varphi$ is the orthogonal projection from $\NS(W)$ with respect to $\Triv(W)$.
Note that, in general, this projection requires tensoring with $\QQ$,
as it involves rational correction terms at the singular fibres,
but since $D$ is supported on the integral orthogonal complement of $\Triv(W)$,
there is presently no need for correction terms.
In other words, $Q$ lies in the narrow Mordell--Weil lattice MWL$^0(W)$ which will be instrumental in the sequel,
see Section \ref{s:p>0}.
In detail,  
$$	
 D = Q - O - ((Q.O)+2)F, 
$$
but then 
\begin{eqnarray}
\label{eq:D-Q}
-D^2 = 4 + 2 (Q.O) \;\;\; \text{ so } \;\;\; (Q.O) = -D^2/2-2.
\end{eqnarray}

\subsection{Step 3}

Choose $-D^2\in\{6,8,\hdots,4d+4\}$ to cover all even residue classes modulo $4d$
while preserving the configuration of singular fibres.
(Here $-D^2=4d$ may be omitted in view of \eqref{eq-modular4d}.)
Then \eqref{eq:D-Q} implies that
$$
(Q.O) \leq 2d.
$$
Since torsion section are always disjoint, we also get the following estimate
$$
(Q.P_i) = (Q.O)+2 = -D^2/2 \leq 2d+2 \;\;\; (i=1,2,3)
$$
from the zero height pairing
$$
0 =
\langle Q,P_i\rangle = 2 + (Q.O) + (P_i.O) - (Q.P_i).
$$


\subsection{Step 4} 

We put $H' := H + D$ and check when this defines a very ample divisor using Criterion \ref{crit}. 
We have
$$
(H')^2 = H^2 + D^2 = 4Nd + D^2 \geq 4d(N-1)-4
$$
and spell out
$$
H'= (N-(Q.O)-2) F +\frac{d}{2}(P_1+P_2+P_3) + \left(\frac{d}{2}-1\right)O  + Q. 
$$
Again, we check the conditions of Criterion \ref{crit}:
\begin{enumerate}
	\item
	$H'.F = H.F=2d\geq 4$;
		\item
	$H'.\Theta = H.\Theta = d\geq 2$ for any component $\Theta$ of a reducible fibre;
	\item 
	$H'.O = N-d$;
	
	\item
	$H'.P_i = N-(Q.O)-2 -d + (Q.P_i)  = N-d$ for each $i=1,2,3$;
	\item
	$H.Q = N +(2d-2)(Q.O) + 3d-4 \geq N+2$;
	\item
	$H'.P'\geq N-(Q.O)-2 \geq N-2d-2$ 
	for any other section $P'$;	
	\item
	$H'.D'\geq d'(N-(Q.O)-2) \geq 2(N-2d-2)$ 
	for any irreducible multisection $D'$ of degree $d'>1$;
in particular, if $p_a(D')=1$, then $H'.D'>2$ as required per Criterion \ref{crit} if $N>2d+3$.
\end{enumerate}

By varying $N>2d+3$ and taking $D$ with $D^2$ in the above range,
we thus derive the following conclusion using Criterion \ref{crit}:

\begin{conclusion}
For any even $d$,
we obtain projective models of $W$ 
for any degree  $2h\geq 4d(2d+3)-4$
with 24 smooth rational curves of degree $d$.
\end{conclusion}

In particular, this completes the proof of 
Proposition \ref{thm:even_d_over_C}. 
\end{proof}

\begin{rem}
This already proves that Miyaoka's bound for conics  (\cite[Proposition A]{Miyaoka}) on complex degree-$2h$ K3 surfaces is sharp for $h > 200$,
as claimed in Corollary \ref{cor}.
\end{rem}

\section{Complex case, odd $d$}
\label{s:odd_d}

In this section we use lattice enhancements and theory of lattice-polarized complex K3 surfaces to deal with the case when 
$d$ is odd.

\begin{prop}
\label{thm:odd_d_over_C}
For odd $d>1$ and every even $h\geq 2d(2d+3)-2$
there exists a complex projective degree-$2h$ $K3$ surface with exactly $24$ \emph{ smooth degree-$d$} rational curves. 
\end{prop}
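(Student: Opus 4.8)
The plan is to produce the twenty-four curves once more as the components $\Theta_{i,j}$ of twelve fibres of type $\IK_2$, so that it suffices to construct a very ample $H$ with $H.\Theta_{i,j}=d$ for all $i,j$ and $H^2=2h$. For odd $d$ the divisor $\tfrac d2(O+P_1+P_2+P_3)$ of Proposition~\ref{thm:even_d_over_C} is not integral, so instead I look for $H=NF+\tfrac{d-1}{2}(O+P_1+P_2+P_3)+B$ with a class $B$ satisfying $B.F=2$ and $B.\Theta_{i,j}=1$ for every $i,j$; then $H.\Theta_{i,j}=(d-1)+1=d$ and $H.F=2d$, exactly as before. The obstruction is that no such $B$ lives on the generic surface of Section~\ref{s:even_d}: in the frame $U\oplus A_1^{12}$ the conditions on $B$ force the half-integral term $-\tfrac12\sum_j\Theta_{1,j}$, whose glue is the all-ones vector of the component group $(\ZZ/2)^{12}$, and this vector is not contained in the index-$4$ torsion overlattice $\NS(X)$. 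Thus $B$ can only be created by enlarging the Picard lattice.

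Accordingly, I would first fix a rank-$15$ even lattice $M$: adjoin to $\NS(X)$ one further non-torsion section whose class in the component group, together with the two torsion glues, spans the all-ones vector, thereby rendering $B$ and hence $H$ integral, while keeping $\Triv=U\oplus A_1^{12}$---and with it the twelve $\IK_2$-fibres---intact. This is the analogue of the section $Q$ from Section~\ref{s:even_d}, except that it is now \emph{non-narrow}: it meets a non-identity component at several fibres and hence contributes rational correction terms there (see \cite{MWL}), so that its orthogonal projection $\varphi(\cdot)$ to $\NS(X)$ acquires the half-integral part responsible for $B$. Over $\CC$ the existence of a K3 surface $W$ with $\NS(W)\cong M$ then follows from the theory of lattice-polarized K3 surfaces, i.e.\ from surjectivity of the period map together with Nikulin's criterion \cite{nikulin} for a primitive embedding $M\hookrightarrow U^{3}\oplus E_8^{2}$ whose orthogonal complement $T(W)$ has signature $(2,5)$; this is precisely the input that is unavailable in positive characteristic and is deferred to Section~\ref{s:p>0}.

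For a very general member of this $M$-polarized family one has $\NS(W)=M$, so the only $(-2)$-curves are the twenty-four fibre components, they form twelve disjoint $\IK_2$-fibres, and the verification that $H$ meets the hypotheses of Criterion~\ref{crit} runs exactly as in Step~4 of Section~\ref{s:even_d}, the only additional checks being $H.B$ and the intersection of $H$ with the new multisection. The parity hypothesis enters through the bookkeeping of $H^2$: reducing the contribution of the half-integral class $B$ modulo $4$ (compare \eqref{eq:D-Q}) shows that the discriminant form of the even lattice $T(W)$ of signature $(2,5)$ is admissible, in the sense of Nikulin's existence theorem, for exactly one residue of $h$ modulo $2$, and this residue is the even one; the opposite parity is the open case addressed in Section~\ref{sec-finalremarks}. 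Finally, varying $N$ and composing with a further transcendental enhancement of even self-intersection---verbatim as in Steps~2--4 of Section~\ref{s:even_d}---shifts $H^2$ by arbitrary even amounts and realizes every even $h\ge 2d(2d+3)-2$.

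The crux is the lattice-theoretic existence together with this parity bookkeeping. One must produce a rank-$15$ even overlattice $M\supset\NS(X)$ that simultaneously (i) retains $\Triv=U\oplus A_1^{12}$, so that the twelve $\IK_2$-fibres and hence the twenty-four curves survive, (ii) contains an \emph{integral} polarization $H$ with $H.\Theta_{i,j}=d$, and (iii) embeds primitively into $U^{3}\oplus E_8^{2}$ with an admissible orthogonal complement---and it is this admissibility, dictated by the discriminant form, that forces $h$ to be even. A secondary obstacle, familiar from the condition $D^2<-4$ in Section~\ref{s:even_d}, is to keep the self-intersection of the enhancing section negative enough that no stray $(-2)$-curve appears which could merge two $\IK_2$-fibres or spoil the positivity required by Criterion~\ref{crit}; since the bound on $h$ is already quadratic in $d$, there is plenty of room to arrange this.
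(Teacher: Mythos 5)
Your construction is, at its core, the same as the paper's. The paper also enlarges $\NS(X)$ by a height-$2$ section $Q$ whose glue vector in the component group $(\ZZ/2\ZZ)^{12}$ is the all-ones vector (realized explicitly as $Q=D/2+v+v_1$ for $D\in T(X)$ with $D^2=-8$, via a concrete basis of the discriminant group), so that $Q$ meets every $\IK_2$ fibre at the non-identity component; it then takes the integral polarization $H=NF+d(O+Q)$, which satisfies $H.\Theta_{i,j}=d$ exactly as your $H=NF+\tfrac{d-1}{2}(O+P_1+P_2+P_3)+B$ with $B$ of type $O+Q$ does, and finally performs a second enhancement $D'=\varphi(Q')$ with $D'^2\in\{-8,-12,\dots,-4d-4\}$ chosen inside $T(W)$ so as to preserve the twelve $\IK_2$ fibres, checking Criterion~\ref{crit} for $N>(Q'.O)+3$. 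You correctly identify the key obstruction (the all-ones glue vector is not in the index-$4$ overlattice $\NS(X)$, so a new non-narrow section is needed), and your deferral of non-emptiness issues to positive characteristic matches the paper's architecture.

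The genuine flaw is in your final parity/coverage bookkeeping, and it is self-contradictory as written. You claim that composing with further transcendental enhancements ``shifts $H^2$ by arbitrary even amounts''; this is false and, were it true, it would realize \emph{odd} $h$ as well, contradicting both your own parity claim and the fact that odd $d$, odd $h$ is precisely the case the paper leaves open in Section~\ref{sec-finalremarks}. For the enhanced surface one has $T(W)\cong U(2)^2\oplus A_3(2)$, which is twice an even lattice and hence represents only integers divisible by $4$; so the available shifts $D'^2$ are multiples of $4$, which is the actual (and only) reason $h=H^2/2$ stays even. Relatedly, your assertion that Nikulin admissibility of the discriminant form of $T(W)$ is ``admissible for exactly one residue of $h$ modulo $2$'' is not a real argument --- no such dichotomy exists in Nikulin's theorem, and whether some \emph{other} rank-$15$ overlattice $M$ could produce odd $h$ is exactly the open question. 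The correct conclusion of the argument runs: $H^2=4Nd+D'^2$ with $D'^2/2\in\{-4,-6,\dots,-2d-2\}$ sweeps through $d$ consecutive even values, hence (as $d$ is odd) all even residues modulo $2d$, and varying $N$ then covers every even $h\geq 2d(2d+3)-2$. A secondary omission, fixable but not addressed in your sketch, is verifying that the overlattice $M$ is actually even and that the all-ones glue is compatible with the torsion glues $v_1,v_2$ --- the paper does this by the explicit discriminant-form computation, which is what guarantees your postulated class $B$ (equivalently the section $Q$) exists at all.
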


\begin{proof}
To prove the proposition, we employ the same approach as in the previous section,
but we start from a specific K3 surface $W$ deforming inside a 5-dimensional family 
obtained by enhancing $X$ by a section $Q$ of height $2$ meeting all $I_2$ fibres in a different component than $O$,
to be called the non-identity component.
Explicitly, we let the divisor $D$  with $D^2=-8$
be the sum of generators of the four orthogonal $A_1$ summands in $T(X)$ in \eqref{eq:T(X)}.
Then $D/2\in T(X)^\vee$ glues to an element  $v\in\NS(X)^\vee$
determined by a given anti-isometry of discriminant groups $A_{T(X)}\cong A_{\NS(X)}$.
Presently we can set this up as follows.

$\NS(X)$ is an index 4 overlattice of $U+A_1^{12}$,
with generators of the $A_1$ summands denoted by $a_i\; (i=1,\hdots,12)$,
obtained by adjoining the vectors $v_1=(a_1+\hdots+a_8)/2$ and $v_2=(a_5+\hdots+a_{12})/2$
which correspond to two of the 2-torsion sections.
We develop a  basis of the discriminant group $A_{\NS(X)}$ which realizes the anti-isometry with $A_{T(X)} \cong A_{U(2)}^2 \oplus A_{A_1}^4$ by 
choosing 
\begin{itemize}
\item
$(a_1+a_2)/2, (a_1+a_3)/2$ corresponding to one copy of $A_{U(2)}$,
\item
$(a_5+a_6)/2, (a_5+a_7)/2$ corresponding to another copy of $A_{U(2)}$,
\item
$(a_4+a_8+a_i)/2$ for $i=9,\hdots,12$ for four copies of $A_{A_1}$.
\end{itemize}
It now follows that $D/2$ glues to $v=(a_9+\hdots+a_{12})/2$ in $\NS(W)$,
and $D/2+v+v_1$ is the class of the anticipated section $Q$
meeting each $I_2$ fibre at the non-identity component;
by construction, $D/2=\varphi(Q)$ is the image of $Q$ under the orthogonal projection $\varphi$
 from $\NS(W)_\QQ$
with respect to the trivial lattice $\Triv(W) = \Triv(X)$.

As before, we obtain a 5-dimensional family of complex K3 surfaces whose very general member $W$ 
has transcendental lattice
\[
T(W) = D^\perp = U(2)^2 \oplus A_3(2) \subset T(X).
\]
As a direct application of Criterion \ref{crit}, we obtain
\begin{lemm}
\label{lem:odd_d}
$H=NF+d(O+Q)$ is very ample for all $N>2d$.
\end{lemm}

Since $H^2=4Nd$, this implies Theorem \ref{thm:odd_d_over_C} for all $h>8d^2$ with $h\equiv 0\mod 2d$.
To cover all other residue classes modulo $2d$, we further enhance $\NS(W)$
by a divisor $D'\in T(W)$ with $D'^2<-4$.
Again, this results in a family of K3 surfaces, this time 4-dimensional.
By choosing $D'$ supported on $U(2)^2$, we can ensure that
the induced elliptic fibration on 
the very general member $V$ is  non-degenerate, i.e.\ it still has 12 $I_2$ fibres.

\begin{lemm}
\label{lem:odd_d_va}
$H+D'$ is very ample for all $N>(Q'.O)+3$.
\end{lemm}

\begin{proof}[Proof of Lemma \ref{lem:odd_d_va}]
The proof of the lemma proceeds as in the previous section by writing
$$
D' = \varphi(Q') = Q' - O - ((Q'.O)+2)F
$$
for some section $Q'\in\MW(V)$.
Then we check the conditions of Criterion \ref{crit}
as in Step 4, including the section $Q'$.
\end{proof}

With Lemma  \ref{lem:odd_d_va} at our disposal, Proposition  \ref{thm:odd_d_over_C} follows immediately
for all even $h\geq 2d(2d+3)-2$
by taking $D'$ with $D'^2=-8, -12,\hdots, -4d-4$
because then $-4d-4\leq D'^2 = \varphi(Q')^2 = -4-2(Q'.O)$, whence $(Q'.O)\leq 2d$.
%
%
\end{proof}

\begin{rem}
Since $T(W)$ only represents integers divisible by $4$,  the above approach cannot cover 
the  residue classes congruent to $2$  modulo $4$.
\end{rem}

\begin{summary}
Together with Proposition \ref{thm:even_d_over_C},
this proves Theorem \ref{thm} over algebraically closed fields of characteristic zero.
\end{summary}

\section{Algebraic approach}
\label{s:p>0}

To prove Theorem \ref{thm} in positive characteristic,
one can in principle pursue the same moduli-theoretic approach as in the previous sections.
However, there is the subtlety of proving that the resulting Noether--Lefschetz loci
parametrising the enhanced K3 surfaces $W$ resp.\ $V$ are non-empty.
In \cite{RS-23}, this kind of problem was overcome by considering terminal objects in higher Noether--Lefschetz strata.
Here we follow a different method by working, almost, with two single K3 surfaces doing the job directly for us,
thanks to their large Mordell--Weil lattices.
In fact, the arguments from Sections \ref{s:even_d}, \ref{s:odd_d}
carry over literally once we know that there is the desired section $Q$ (or also $Q'$),
and this can be achieved almost independently of the characteristic.
Our first surface will showcase this in a prototypical way.

\subsection{A special singular K3 surface}

Let $X$ denote the complex K3 surface with transcendental lattice $T(X)=$ diag$(12,12)$.
By \cite[Thm.\ 2.1]{Shioda-T}, following \cite{Kuwata}, $X$ can be given by the Weierstrass form
\begin{eqnarray}
\label{eq:X}
X: \;\;\; y^2 = x^3 - 3 t^4 x + t^{12} + 1
\end{eqnarray}
which exhibits $X$ as a triple cover of the Kummer surface of $E\times E$
where $E$ denotes the elliptic curve with an automorphism of order $4$.
The given elliptic fibration has 12 fibres of type $\IK_2$,
non-degenerate outside characteristics $2,3$,
Picard number $\rho(X)=20$ and Mordell--Weil group $\MW(X) \cong \ZZ^6\times (\ZZ/2\ZZ)^2$
over $\CC$.
The same holds in characteristic $p\equiv 1\mod 4$;
otherwise, i.e. if $p\equiv 3\mod 4, p>3$, the surface becomes supersingular and the Mordell--Weil rank is raised to $8$,
but in what follows we will only need the above subgroup which always persists;
for simplicity, we therefore state all results only for the complex K3 surface $X$.

\begin{lemm}
$\MWL(X) \cong A_1^-(1/2)^2 \oplus A_2^-(1/2)^2$.
\end{lemm}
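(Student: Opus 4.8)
The plan is to compute the Mordell--Weil lattice of the elliptic K3 surface $X$ given by \eqref{eq:X} by combining three inputs: the Shioda--Tate formula, the discriminant of $\MW(X)$ coming from the known transcendental lattice $T(X)=\mathrm{diag}(12,12)$, and explicit sections whose heights and intersection numbers we can read off from the Weierstrass model. Since $\rho(X)=20$ and all 12 singular fibres are of type $\IK_2$ (hence $\Triv(X)=U\oplus A_1^{12}$ has rank $14$), the free part $\MW(X)/\mathrm{tors}$ has rank $20-14=6$, matching $\MW(X)\cong\ZZ^6\times(\ZZ/2\ZZ)^2$; so $\MWL(X)$ is a positive-definite lattice of rank $6$ (note that the superscript $-$ in the claimed answer signals that we record the negative-definite narrow lattice, or equivalently we keep track of the sign convention used in the height pairing throughout the paper).

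First I would pin down the determinant of $\MWL(X)$ numerically. By the general theory of Mordell--Weil lattices \cite{MWL}, one has the relation between $\det\NS(X)$, $\det\Triv(X)$, the order of the torsion subgroup, and $\det\MWL(X)$; concretely, $\det\NS(X)=\pm\det T(X)=144$ up to sign, while $\det\Triv(X)=\det(U\oplus A_1^{12})=(-1)\cdot 2^{12}$ in absolute value. Feeding in $|\MW(X)_{\mathrm{tors}}|=4$ via the standard formula $|\det\NS(X)|=|\det\Triv(X)|\cdot\det\MWL(X)/|\MW_{\mathrm{tors}}|^2$ yields the target determinant. The claimed lattice $A_1^-(1/2)^2\oplus A_2^-(1/2)^2$ has $\det A_1(1/2)=\tfrac12$ and $\det A_2(1/2)=3\cdot(\tfrac12)^2=\tfrac34$, so its determinant is $(\tfrac12)^2(\tfrac34)^2=\tfrac{9}{64}$; I would check that this is exactly the value forced by the discriminant bookkeeping above, which serves as a strong consistency test and essentially fixes the answer up to isometry among rank-$6$ positive-definite lattices of small determinant.

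To actually identify the lattice rather than just its determinant, I would exhibit explicit sections realizing the summands. The key structural feature is the triple-cover description: $X$ dominates $\mathrm{Kum}(E\times E)$ with $E$ the elliptic curve of $j=1728$, and the order-$4$ automorphism of $E$ together with the triple-cover symmetry produce automorphisms of $X$ acting on $\MWL(X)$. I would use these symmetries to organize the sections into orbits and to recognize the root sublattices: the scaling by $1/2$ reflects that sections meet the $\IK_2$ fibres in the non-identity component, contributing correction terms $\tfrac12$ at each fibre to the height pairing (via the formula $\langle P,P\rangle=4+2(P.O)-\sum_v\mathrm{contr}_v(P)$), which systematically halves the naive integral pairing and explains the $(1/2)$-rescaling of the $A_1$ and $A_2$ blocks. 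Computing the heights $\langle P,P\rangle$ and mutual pairings $\langle P,Q\rangle$ for a generating set of six sections, then matching the resulting Gram matrix to that of $A_1^-(1/2)^2\oplus A_2^-(1/2)^2$, completes the identification.

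The main obstacle will be the last step: producing a \emph{generating} set of six sections and verifying that they generate $\MW(X)/\mathrm{tors}$ rather than merely a finite-index sublattice. Any six independent sections give a sublattice of $\MWL(X)$, and its determinant will be an integer multiple of $\tfrac{9}{64}$; to conclude equality one must rule out proper overlattices. I would handle this by checking that the determinant of the computed Gram matrix equals the determinant forced by the discriminant computation exactly (index $1$), or alternatively by invoking the structure of the discriminant form $A_{\MWL(X)}$ and Nikulin's theory \cite{nikulin} to show that no even overlattice compatible with the glue to $\Triv(X)$ and the required $(\ZZ/2\ZZ)^2$ torsion exists. The explicit realization via the Kummer/triple-cover geometry, together with the known automorphisms, should make the search for sections tractable and pin down the isometry type uniquely.
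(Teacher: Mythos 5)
Your overall plan---Shioda--Tate for the rank, the discriminant formula for the determinant, and explicit sections organized by the automorphisms of $X$---is close in spirit to the paper's explicit verification, but it contains a concrete arithmetic error at its linchpin. Scaling a rank-$n$ lattice by $\lambda$ multiplies the determinant by $\lambda^n$, so $\det A_1(1/2)=2\cdot\tfrac12=1$, not $\tfrac12$; hence
$\det\bigl(A_1^-(1/2)^2\oplus A_2^-(1/2)^2\bigr)=1^2\cdot(3/4)^2=9/16$, not $9/64$.
The discriminant bookkeeping you set up gives $144\cdot 4^2/2^{12}=9/16$, so with the corrected value your consistency check passes with index exactly $1$, whereas with $9/64$ it fails outright (a proper finite-index sublattice has \emph{larger} determinant, by the square of the index, so $9/64<9/16$ is impossible and you would be led to a contradiction). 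Since your entire mechanism for excluding proper overlattices rests on exact determinant matching, the argument as written breaks at its decisive step, even though the fix is one line. A side remark: the superscript $-$ does not signal the ``narrow'' lattice---it merely flips the sign (root lattices are negative definite in this paper while the height pairing is positive definite); the narrow lattice $\MWL^0(X)$ is a different object, treated in the subsequent lemma.

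The second gap is that the production of six \emph{generating} sections is only announced, not carried out, and that is where the real content lies; rank and determinant alone do not determine a rank-$6$ positive-definite lattice up to isometry, contrary to your ``essentially fixes the answer'' claim. The paper obtains the generators structurally rather than by search: the quotient of $X$ by $t\mapsto -t$ is a rational elliptic surface $S$ with six $\IK_2$ fibres whose relevant Mordell--Weil lattice $(A_1^\vee)^2$ (from \cite[\S 8]{SS-MWL}) pulls back, with heights doubled under the degree-$2$ base change, to $A_1^-(1/2)^2$; orthogonally, the quadratic twist $S'$ (quotient by the Nikulin involution $(x,y,t)\mapsto(x,-y,-t)$) has $T(S')=\mathrm{diag}(6,6)$ by \cite[Thm.\ 2.1]{Shioda-T}, and the determinant formula \cite[Cor.\ 6.39]{SS-MWL} together with the symmetry $t\mapsto 1/t$ yields $\MWL(S')\cong(A_2^-)^2$, pulling back to $A_2^-(2)^2$; a final application of the determinant formula forces two $2$-divisibilities, which inspection of fibres and symmetries localizes on the $S'$-pullbacks, converting $A_2^-(2)^2$ into $A_2^-(1/2)^2$. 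Your Gram-matrix-matching route would also work (the paper even records suitable sections, e.g.\ $\hat P$ with $x(\hat P)=\sqrt3(t^2+t+1)(t^2-\sqrt3\,t+1)$ and $Q_0$, transported by $t\mapsto\zeta_3 t$ and $t\mapsto\zeta_4 t$), but until such sections are exhibited with their heights computed, your proposal establishes only the rank and---after the correction above---the determinant.
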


\begin{proof}
Abstractly, this can be built up successively from quotients of $X$ by involutions.
In detail, the quotient by the involution $t\mapsto -t$ is a rational elliptic surface $S$ with 6 fibres of type $\IK_2$,
hence $\MWL(S)^-\cong (A_1^\vee)^2$ by \cite[\S 8]{SS-MWL} 
which pulls back to the two copies of $A_1^-(1/2)$ inside $\MWL(X)$.
Orthogonally to this, we find the pull-back of $\MWL(S')$ for $S'$ the quadratic twist of $S$ at $0, \infty$,
i.e. the minimal resolution of the quotient of $X$ by the Nikulin involution 
$$(x,y,t)\mapsto (x,-y,-t).
$$
Again, by \cite[Thm.\ 2.1]{Shioda-T}, this has $T(S')=$ diag$(6,6)$;
using this and the determinant formula \cite[Cor.\ 6.39]{SS-MWL}, one can prove that $\MWL(S') \cong (A_2^-)^2$,
again using involutions, this time based on the symmetry $t\mapsto 1/t$ of $\PP^1$ which respects
both  \eqref{eq:X} and the induced fibration on $S'$.
This pulls back to $A_2^-(2)^2\subset\MWL(X)$.
Another application of the determinant formula forces two extra divisibilities among the sections on $X$ thus obtained.
By inspection of the configuration of singular fibres and sections, and of the symmetries of $X$,
these divisibilities can only occur on the sections pulled back from $S'$, giving the claimed isometry.
\end{proof}

One can also make this fully explicit.
To this end, we translate the $\QQ$-rational 2-torsion section to $(0,0)$.
Then \eqref{eq:X} transforms to
\[
X: \;\;\; y^2 = x(x^2 - 3(t^4+1)x + 3(t^8+t^4+1)).
\]
This admits a height 4 section $P'$ with $x$-coordinate
$x(P') = 2\sqrt 3 t^2$ pulling back from $S'$,
which is 2-divisible by virtue of the section $\hat P$ with $x(\hat P)=\sqrt 3 (t^2+t+1)(t^2-\sqrt 3 t+1)$.
Applying the order $3$ automorphism $t\mapsto \zeta_3 t$ from \eqref{eq:X} to $\hat P$, we get $A_2^-(1/2)$,
supplemented by another orthogonal copy of $A_2^-(1/2)$,
gained by applying the order $4$ automorphism $\psi: t\mapsto \zeta_4t$.
Similarly, the section $Q_0$ with $x$-coordinate 
$x(Q_0)=(1 + 2\zeta_3)(t^2 - \zeta_3^2)(t^2 + \zeta_3)$ pulls back from $S$ 
and can be augmented by applying $\psi$ to give $A_1(1/2)^2$.

\begin{lemm}
The narrow Mordell--Weil lattice of $X$ equals
\[
\MWL^0(X) \cong A_1^-(2)^2 \oplus A_2^-(2)^2.
\]
\end{lemm}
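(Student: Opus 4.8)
The plan is to pin down $\MWL^0(X)$ as a full-rank sublattice of the Mordell--Weil lattice $\MWL(X)\cong A_1^-(1/2)^2\oplus A_2^-(1/2)^2$ computed above, by trapping it between two explicit lattices that differ by index $2$ and then using a height estimate to decide which one occurs. Recall that $\MWL^0(X)=\varphi(\MW^0(X))$, where $\MW^0(X)\subseteq\MW(X)$ consists of the sections meeting the identity component of every reducible fibre; equivalently $\MWL^0(X)\cong\Triv(X)^{\perp}\cap\NS(X)$, so it is an \emph{even} (being a sublattice of the even lattice $\NS(X)$), negative-definite lattice of rank $6$.

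First I would establish the lower bound. Since all $12$ reducible fibres have type $\IK_2$, every component group is $\ZZ/2\ZZ$, so doubling an arbitrary section lands in the identity component everywhere; hence $2\,\MW(X)\subseteq\MW^0(X)$ and therefore $2\,\MWL(X)\subseteq\MWL^0(X)$. Scaling the previous lemma gives $2\,\MWL(X)\cong A_1^-(2)^2\oplus A_2^-(2)^2$, which is exactly the asserted lattice; it remains to show that no further sections are narrow.

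For the upper bound I would combine integrality with evenness. If $P_0$ is narrow, the local contributions $\mathrm{contr}_v(P,P_0)$ vanish for every section $P$, so the pairing $\varphi(P).\varphi(P_0)$ is an integer; hence $\MWL^0(X)\subseteq\MWL(X)\cap\MWL(X)^{\vee}$. A direct computation from the shape $A_1^-(1/2)^2\oplus A_2^-(1/2)^2$ identifies this integral closure as $A_1^-(1/2)^2\oplus A_2^-(2)^2$ (integrality forces the half-integral $A_2$-blocks to double, while the self-dual $\langle-1\rangle$-blocks survive). Writing $e_1,e_2$ for the two height-$1$ generators of the $A_1^-(1/2)$-blocks, evenness of $\MWL^0(X)$ then confines it to the even part $U:=(A_1^-)^2\oplus A_2^-(2)^2$ of this integral closure, where the two copies of $A_1^-$ are spanned by $e_1\pm e_2$.

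Finally I would close the remaining gap. One checks that $[\,U:2\,\MWL(X)\,]=2$ (the $A_2$-parts agree, while $\langle e_1\pm e_2\rangle\supset\langle 2e_1,2e_2\rangle$ has index $2$), so $\MWL^0(X)$ equals either $2\,\MWL(X)$ or $U$. But $U$ contains $e_1+e_2$ of norm $-2$, i.e.\ a section of height $2$, whereas every narrow section $P_0$ has height $h(P_0)=4+2(P_0.O)\ge4$. Hence $\MWL^0(X)\neq U$, forcing $\MWL^0(X)=2\,\MWL(X)\cong A_1^-(2)^2\oplus A_2^-(2)^2$. The main obstacle is the upper bound: getting the integral-closure computation right and, above all, recognising that the naive even sublattice $U$ is one index-$2$ step too large, a discrepancy resolved precisely by the elementary observation that narrowness forces height at least $4$.
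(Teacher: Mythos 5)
Your proof is correct, and its lower bound is exactly the paper's: since all twelve reducible fibres have type $\IK_2$, every component group is $\ZZ/2\ZZ$, so $2\MW(X)\subseteq\MW^0(X)$ and hence $A_1^-(2)^2\oplus A_2^-(2)^2\subseteq\MWL^0(X)$. For the upper bound, however, you take a genuinely different route. The paper works with the explicit generators: for the six independent sections and the two $2$-torsion sections it records which of the twelve $\IK_2$ fibres are met at the non-identity component, and the resulting $8\times 12$ matrix over $\mathbb{F}_2$ has full rank, so the kernel of $\MW(X)\to(\ZZ/2\ZZ)^{12}$ is exactly $2\MW(X)$ and no further narrow sections exist. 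You instead avoid all component data and argue purely lattice-theoretically: elements of $\MWL^0(X)$ pair integrally with $\MWL(X)$ because the local correction terms vanish for narrow sections, $\MWL^0(X)$ is even, and it is therefore trapped between $2\MWL(X)$ and the even part $U=(A_1^-)^2\oplus A_2^-(2)^2$ of $\MWL(X)\cap\MWL(X)^\vee$; you close the index-$2$ gap by the observation that a narrow section $P_0\neq O$ on an elliptic K3 has height $4+2(P_0.O)\geq 4$, which rules out the norm $-2$ vector $e_1+e_2\in U$. I checked the lattice computations: the integral closure of $A_1^-(1/2)^2\oplus A_2^-(1/2)^2$ is indeed $\langle -1\rangle^2\oplus A_2^-(2)^2$ (integrality forces $a,b\in 2\ZZ$ in each $A_2^-(1/2)$ block, and $2\,A_2^-(1/2)\cong A_2^-(2)$), its even part is $U$, and $[U:2\MWL(X)]=2$, so $\MWL^0(X)$ must be one of the two endpoints. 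What each approach buys: the paper's $\mathbb{F}_2$-rank computation requires the explicit fibre-component incidences of the eight sections (data the paper has assembled anyway, and which it reuses elsewhere), whereas your argument needs only the isometry type of $\MWL(X)$ from the preceding lemma together with general Mordell--Weil lattice theory (vanishing contributions, evenness, and the minimal narrow height on a K3), so it is softer and self-contained at the level of lattices, at the price of not independently confirming the component data.
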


\begin{proof}
By inspection of the singular fibres, any section $P\in\MW(X)$ satisfies $2P\in\MW^0(X)$,
hence 
$\MWL^0(X) \supseteq A_1^-(2)^2 \oplus A_2^-(2)^2$.
On the other hand, one can check explicitly with the above 6 sections and the 2-torsion sections
that there cannot be any further sections in $\MWL^0(X)$.
(The $8\times 12$ matrix indicating the fibres met at non-identity components has full rank over $\mathbb F_2$.)
\end{proof}

\begin{prop}
\label{prop:even_h}
For any $d>1$ and every even $h\geq 2d(2d+3)-2$, 
there exists a degree-$2h$  model of $X$ with exactly $24$ \emph{ smooth degree-$d$} rational curves
in any characteristic $p\neq 2,3$.
\end{prop}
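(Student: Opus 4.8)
The strategy is to realize the desired degree-$2h$ models directly on the single explicit K3 surface $X$ from \eqref{eq:X}, exploiting its large narrow Mordell--Weil lattice rather than invoking nonempty Noether--Lefschetz loci. The guiding principle is that the construction in Sections~\ref{s:even_d} and~\ref{s:odd_d} only ever used the \emph{existence} of a section $Q$ (or $Q'$) lying in $\MW^0(W)$ with a prescribed value of $(Q.O)$; once such a section is available on $X$ itself, the very-ampleness verification via Criterion~\ref{crit} carries over verbatim. So the problem reduces to a lattice-theoretic existence statement: for each residue class of $h$ modulo the relevant modulus, I must produce a narrow section $Q$ on $X$ with the correct self-intersection of its orthogonal projection.

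First I would fix an even target $h$ with $h \geq 2d(2d+3)-2$ and set $H = NF + \tfrac{d}{2}(O+P_1+P_2+P_3)$ as in Step~1, which already handles the residue classes with $H^2 \equiv 0 \bmod 4d$. For the remaining even residue classes I need a section $Q \in \MW^0(X)$ whose projection $D=\varphi(Q)$ satisfies $D^2 = -4-2(Q.O)$ ranging over the values $\{-6,-8,\hdots,-4d-4\}$, exactly as in Step~3; equivalently, I need narrow sections realizing heights $\langle Q,Q\rangle$ in the appropriate arithmetic progression. The key input is the computed isometry $\MWL^0(X) \cong A_1^-(2)^2 \oplus A_2^-(2)^2$, which I would use to show this lattice represents every value in the required range. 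Since $A_1^-(2)$ contributes even values $\{-4,-8,-12,\hdots\}$ and $A_2^-(2)$ contributes $\{-4,-8,-12,\hdots\}$ as well (multiples of $4$, together with sums giving finer coverage through cross terms), I would assemble vectors across the four summands so that every even integer $\leq -6$ up to $-4d-4$ is attained; the even parity of $h$ guarantees compatibility with the fact that $T(W)$ can only represent multiples of $4$ when odd $d$ is involved, mirroring the final remark in Section~\ref{s:odd_d}.

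Once the section $Q$ with the prescribed $(Q.O)$ is fixed, I would set $H' = H + D$ and repeat the seven-point check of Step~4 line by line, using that $Q$ is narrow so that no rational correction terms intervene in $\varphi$ and that $D \in \Triv(X)^\perp$ keeps the fibre configuration of type $\IK_2$ intact. The bounds $(Q.O)\leq 2d$ and $(Q.P_i) = -D^2/2 \leq 2d+2$ follow exactly as before from \eqref{eq:D-Q} and the vanishing of the height pairing against the torsion sections, so conditions (1)--(7) of Criterion~\ref{crit} hold whenever $N>2d+3$, yielding the $24$ curves $\Theta_{i,j}$ as smooth degree-$d$ rational curves with $H'.\Theta_{i,j}=d$.

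\textbf{Main obstacle.} The delicate point is the lattice-representation step: I must verify that the narrow lattice $A_1^-(2)^2 \oplus A_2^-(2)^2$ genuinely represents \emph{every} even value in the full range $\{-6,\hdots,-4d-4\}$ by a \emph{primitive} vector whose projection keeps all twelve fibres of type $\IK_2$, rather than merging some into $D_4$ or $A_3$ as flagged in Step~2. Because each summand a priori only represents multiples of $4$, the odd-multiple-of-$2$ values (those $\equiv 2 \bmod 4$) are exactly the subtle cases, and this is precisely where evenness of $h$ must be exploited to avoid them; confirming that the glue vectors coming from the torsion sections supply the missing representations, while preserving the singular-fibre configuration and the disjointness needed for the count of $24$, is the technical heart of the argument.
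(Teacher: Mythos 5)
Your proposal has a genuine gap: it never actually handles odd $d$, which is the main content of the paper's proof. Your construction runs entirely through Step~1's polarization $H=NF+\tfrac d2(O+P_1+P_2+P_3)$, which is an integral divisor only for even $d$, whereas the proposition asserts the result for \emph{any} $d>1$. The paper's proof splits accordingly: the even-$d$ case is indeed handled essentially as you describe (transplanting Section~\ref{s:even_d} onto the fixed surface $X$), but for odd $d$ one must instantiate the construction of Section~\ref{s:odd_d} on $X$. That requires exhibiting a height-$2$ section meeting \emph{every} $\IK_2$ fibre at the non-identity component (so that $NF+d(O+Q)$ has uniform degree $d$ on all $24$ fibre components), which the paper produces explicitly as $Q=Q_0+\psi^*Q_0$; the augmenting narrow section $Q'$ must then lie in the orthogonal complement $\varphi(Q)^\perp\subset\MWL^0(X)$, which the paper computes to be $A_1^-(4)\oplus A_2^-(2)^2$ and shows to represent all integers divisible by $4$ via the 290 theorem. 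Note that this $Q$ is \emph{not} narrow --- its projection $\varphi(Q)$ carries correction terms $\tfrac12\sum_j\Theta_{1,j}$ --- so your guiding principle that everything reduces to finding sections in $\MW^0$ with prescribed $(Q.O)$ is incorrect in the odd-$d$ case; nothing in your proposal produces $Q$, the orthogonality constraint on $Q'$, or the representation statement for the complement, and the vague appeal to ``mirroring the final remark in Section~\ref{s:odd_d}'' does not substitute for these steps.

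Your lattice arithmetic also needs repair, though these slips are less fatal. First, $A_1^-(2)$ represents only the values $-4n^2$, and $A_2^-(2)$ only $-4$ times numbers of the form $x^2-xy+y^2$, not all multiples of $4$ each; what is true --- and what the paper uses --- is that the full rank-$6$ lattice $A_1^-(2)^2\oplus A_2^-(2)^2$ represents every negative multiple of $4$, by Lagrange's four-square theorem. Second, since $\MWL^0(X)$ is twice an even lattice, values $\equiv 2\bmod 4$ are not represented at all, and your proposed rescue via ``glue vectors coming from the torsion sections'' cannot work: torsion sections have height zero, and any non-narrow section introduces rational correction terms in $\varphi$ and meets some non-identity components, destroying the uniform intersection numbers $H'.\Theta_{i,j}=d$ on which the count of $24$ rests. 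These residue classes are simply not needed, because $h$ even forces $D^2\equiv 0\bmod 4$ --- this is precisely why the proposition restricts to even $h$. Finally, your declared ``main obstacle'' (primitivity of $D$ in $T(X)$, fibres merging into $D_4$ or $A_3$) belongs to the deformation argument of Section~\ref{s:even_d}; on the fixed surface $X$ no Noether--Lefschetz enhancement occurs, $D=\varphi(Q)$ comes from an honest section, and the twelve $\IK_2$ fibres are given once and for all, so those conditions are vacuous here.
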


\begin{proof}
As explained, it suffices to prove the claim over $\CC$.
For even $d$, this follows readily by applying the results from Section \ref{s:even_d}
because $\MWL^0(X)$ obviously represents any integer divisible by $4$ by the four square theorem.

For odd $d$, we first single out the height 2 section 
$Q=Q_0+\psi^*Q_0$ which exactly takes the shape required in Section \ref{s:odd_d}.
It can thus be augmented by any element in $\varphi(Q)^\perp\subset\MWL^0(X)$.
This lattice equals $A_1^-(4)\oplus A_2^-(2)^2$ and still represents all integers divisible by $4$
as an immediate application of the 290 theorem.
\end{proof}

\begin{summary}
Theorem \ref{thm} is proved for even $h\geq 2d(2d+3)-2$   in all characteristics $p\neq 2,3$.
\end{summary}

\begin{rem}
Systematically, $X$ can be found using Nishiyama's method \cite{Nishi}.
This amounts to exhibiting a rank 6 even negative-definite lattice lattice $L$
(of the same discriminant form as $T(X) =$ diag$(12,12)$)
embedding primitively into the Niemeier lattice $N(A_1^{24})$
such that the orthogonal complement $L^\perp$ has root sublattice $R(L^\perp)=A_1^{12}$.
\end{rem}

\subsection{Elliptic K3 surfaces  coming from Kummer surfaces}

To cover odd $h$ (and even $d$) in Theorem \ref{thm} in positive characteristic,
we start with Kummer surfaces of the Jacobians of genus 2 curves.
By \cite{Kumar}, these admit an elliptic fibration with two fibres of type $\IK_0^*$ and six $\IK_2$,
just like $S'$ above.
Hence quadratic base change ramified at the $\IK_0^*$ fibres yields a 3-dimensional family of K3 surfaces 
\begin{eqnarray}
\label{eq:Kumar}
y^2  &= & (x + 4 (a - 1) (b - 1) c (t^2 - a) (t^2 - b))\cdot\nonumber\\ 
&  & (x + 4 (b - 1) (c - 1) a (t^2 - b) (t^2 - c))\cdot\\
& & (x + 4 (c - 1) (a - 1) b (t^2 - c) (t^2 - a))\nonumber
\end{eqnarray}
with 12 fibres of type $\IK_2$ at the square roots of $a,b,c,ab,bc,ca$.
Very generally, one has $\MW\cong\ZZ^3\times(\ZZ/2\ZZ)^2$
with linearly independent sections  given by the following $x$-coordinates
\begin{eqnarray*}
P: & x(P) = 
4  (t^2 - a) (t^2 - b) (t^2 - c) (t^2-a b c)/t^2
&
h(P) = 2\\
Q: & x(Q) = 
-(4 a b c - 4 a b - 4 c^2 + 4 c) (t^2 - a) (t^2 - b)
&
h(Q)=1 \\
Q': & x(Q') =   
4  (1-c)((a b - a - b + a b/c )t^2 - a b (a + b - c - 1)) (t^2 - c) 
&
h(Q')=1
\end{eqnarray*}
More precisely, the sections are orthogonal with respect to the height pairing,
of height indicated by the last entry in each row.
Moreover, $P$ meets each $\IK_2$ fibre at the non-identity component,
and the same holds for $Q+Q'$ exactly as in the previous subsection.
It follows that the narrow Mordell--Weil lattice contains
\[
\MWL^0 \supseteq \langle 2Q, 2Q', P+Q+Q'\rangle \;\;\; \text{ with quadratic form } \;\;\; 
\begin{pmatrix} 4 & 0 & 2\\
0 & 4 & 2\\ 2 & 2 & 4
\end{pmatrix}
\]
(very generally as a finite index sublattice).
The quadratic form obviously only represents integers divisible by $4$
(as in the previous section),
so we specialize to a subfamily admitting a section $P'$ of height $3/2$.
Assuming $P'$ to take the shape 
\[
x(P') = d(t^2 - a)(t - \beta)(t - \gamma) \;\;\; \text{ where } \;\;\; \beta^2=b,\; \gamma^2=c,
\]
one easily finds that $d = -4 \beta \gamma (\gamma - 1) (\beta - 1) (a - 1)$
ensures $P'$ to meet the $\IK_2$ fibre at $-\beta\gamma$ at the non-identity component,
in addition to the fibres at $\pm \sqrt a, \beta, \gamma$.
Moreover  $a,\beta,\gamma$ have to lie on a hypersurface $Z\subset\mathbb A^3$ which is birational to the double cover of $\PP^2$ branched in the sextic
\[ 
( \gamma + 1) ( \beta + 1)( \beta +  \gamma)( \beta^2  \gamma +  \beta  \gamma^2 +  \beta^2 - 6  \beta  \gamma +  \gamma^2 +  \beta +  \gamma) = 0.
\]
One can show that this defines the singular K3 surface with transcendental lattice diag$(2,6)$,
but we will only need that one can work out explicit examples for $P'$ by using the rational curve in $Z$ given by $\beta+\gamma=-1$, for instance (on which the fibration \eqref{eq:Kumar} does not degenerate).
Using this, one can verify that $(P'.P)=2, (P'.Q)=0, (P'.Q')=1$ whence the height pairing returns 
a rank 4 sublattice
\[
 \langle 2Q, 2Q', P+Q+Q',2P'\rangle\subseteq\MWL^0
 \;\;\;
  \text{ with quadratic form } \;\;\; 
A = \begin{pmatrix} 4 & 0 & 2 & 2\\
0 & 4 & 2 & 0\\ 2 & 2 & 4 & 0\\
2 & 0 & 0 & 6
\end{pmatrix}.
\]
For a very general K3 surface $Y$ inside this 2-dimensional family, the above inclusion has finite index.

\begin{lemm}
The quadratic form $A/2$ represents every integer except for $1$. 
\end{lemm}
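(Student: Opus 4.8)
The plan is to treat this as a classical representation problem for the positive-definite quaternary form
\[
q(x,y,z,w)=\tfrac12\,(x,y,z,w)\,A\,(x,y,z,w)^{\mathsf T}=2x^2+2y^2+2z^2+3w^2+2xz+2xw+2yz,
\]
whose Gram matrix $A/2$ is integral of determinant $9$. I would split the statement into two claims: (a) $q$ does not represent $1$; (b) $q$ represents every integer $n\geq 2$.

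For (a) I would diagonalize $q$ over $\QQ$. Completing squares successively in $x$, then $y$, then $z$ gives $q=2a^2+2b^2+c^2+\tfrac94 w^2$ with $a=x+\tfrac{z+w}2$, $b=y+\tfrac z2$, $c=z-\tfrac w2$, so $q\cong_{\QQ}\langle 2,2,1,\tfrac94\rangle$. Hence $q\geq\tfrac94 w^2$, so $w\neq 0$ forces $q\geq 3$, while the slice $w=0$ equals $2(x^2+y^2+z^2+xz+yz)\in 2\ZZ_{\geq 0}$. In particular the minimum of $q$ is $2$ and $q=1$ is impossible. (This single computation also records that $q$ takes odd values exactly when $w$ is odd.)

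For (b) I would use the analytic theory behind the $290$-theorem rather than a single ternary slice, the point being that no ternary slice can suffice: the slice $w=0$ equals $2f$ with $f=x^2+y^2+z^2+xz+yz$, and $f$ fails to represent $14$ (equivalently $28$ is not a sum of three squares), so $q|_{w=0}$ already misses $28$; the full rank of $q$ is genuinely needed. First I would check that $q$ is $\ZZ_p$-universal for every prime $p$: away from $3$ it is unimodular of rank $4$; at $p=3$ it splits off a unimodular rank-$3$ block, which is already $\ZZ_3$-universal; at $p=2$ it is odd unimodular of rank $4$ and one checks directly that it attains every residue class modulo $8$ (e.g. $q(1,0,0,1)=7$, $q(0,1,0,1)=5$, $q(0,1,1,1)=9$). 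Thus every $n\geq 1$ is represented everywhere locally, so the Eisenstein part of the representation numbers $r(n)=\#\{v:q(v)=n\}$ is positive and grows like $n^{1-\varepsilon}$, while the cuspidal part is $O(n^{1/2+\varepsilon})$ by Deligne's bound (the theta series has weight $2$). This yields an explicit $N_0$ beyond which $r(n)>0$, and the finitely many $2\leq n\leq N_0$ are confirmed by direct inspection.

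The main obstacle is precisely the exclusion of $1$. Since $q(0,1,1,1)=9\equiv 1\pmod 8$ shows that $1$ is represented $2$-adically (and it is clearly represented at every other place and over $\RR$), the value $1$ carries no local obstruction whatsoever. Consequently $q$ is \emph{not} regular: its genus contains a second class that does represent $1$, and the failure of $q$ itself to do so is a purely global phenomenon, reflecting an exact cancellation $r_{\mathrm{Eis}}(1)+r_{\mathrm{cusp}}(1)=0$ between the Eisenstein and cuspidal contributions. No congruence argument or single-slice argument can detect this, so the real work is the effective cusp-form estimate together with the finite verification up to $N_0$ --- exactly the mechanism underlying the $290$-theorem, here deployed to establish universality on $\{n\geq 2\}$ rather than on all of $\ZZ_{>0}$.
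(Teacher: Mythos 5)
Your part (a), the exclusion of $1$, is correct and complete: the identity $q=2a^2+2b^2+c^2+\tfrac{9}{4}w^2$ checks out, the slice $w=0$ is $2(x^2+y^2+z^2+xz+yz)$ with a positive definite ternary form, and this is in fact more self-contained than the paper, which does not spell this step out. The genuine gap is in part (b). Your central analytic claim --- that the Eisenstein part of $r(n)$ ``is positive and grows like $n^{1-\varepsilon}$'' for every locally represented $n$ --- is false for this particular form, because $q$ is anisotropic over $\QQ_2$: your own diagonalization gives $q\cong_{\QQ}\langle 2,2,1,\tfrac94\rangle\cong\langle 1,1,2,2\rangle$, and a direct congruence check modulo $16$ (or the Hasse-invariant criterion: square discriminant, $\epsilon=+1\neq(-1,-1)_2$) shows $x^2+y^2+2z^2+2w^2$ has no nontrivial zero over $\QQ_2$. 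At an anisotropic prime the local density of $n$ decays like $2^{-v_2(n)}$, so along $n=4^k n_0$ the Eisenstein coefficient is comparable to the $4$-free part $n_0$ rather than to $n$, and for $n$ with small odd part the Deligne bound $O(n^{1/2+\varepsilon})$ on the cuspidal contribution gives no positivity at all. The step is repairable by the standard reduction $q(2v)=4q(v)$, hence $r(4n)\geq r(n)$, restricting to $4\nmid n$ (with $n=4^k=q(2^{k-1},2^{k-1},0,0)$ handled directly), but that reduction is exactly what your write-up omits. Relatedly, your $\ZZ_2$-universality check exhibits only odd residues modulo $8$; surjectivity mod $8$ alone does not prove $\ZZ_2$-universality, since even and highly $2$-divisible targets require a Hensel/Jordan-splitting analysis --- and here the fourth variable is genuinely needed: the slice $2f$ misses, already $2$-adically, all $n=2\cdot 4^k(16m+14)$, so your counterexample $28$ is in fact a local failure of the slice, not merely a global one.

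Beyond this, the proposal never produces the explicit $N_0$ (which would require computing the level of $q$, a basis of the relevant weight-$2$ cusp space, and effective constants in both the Eisenstein lower bound and the Petersson-norm estimate for the cuspidal part), nor does it perform the finite verification for $2\leq n\leq N_0$; as written it is a program rather than a proof. The paper short-circuits all of this in one line: it invokes Corollary 2 of Barowsky et al., a ready-made $290$-type finite criterion for positive definite classically integral forms that represent every positive integer except one prescribed value $m$ (here $m=1$), so the lemma reduces to verifying finitely many explicit representations. Your analytic outline is indeed the machinery underlying such criteria, but to count as a proof it must treat the anisotropic prime $2$ correctly and actually carry out the effective computation --- or simply cite such a finite criterion, as the paper does.
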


\begin{proof}
Direct check by 
applying \cite[Corollary 2]{Barowsky} to the exception $m=1$.
\end{proof}

By the same arguments as in the proof of Proposition \ref{prop:even_h}, we can thus prove the following.

\begin{prop}
\label{prop:even_d}
For any even $d$ and every odd $h\geq 2d(2d+3)-2$, 
there exists a degree-$2h$  model of $Y$ with exactly $24$ \emph{ smooth degree-$d$} rational curves
in any characteristic $p\neq 2,3$.
\end{prop}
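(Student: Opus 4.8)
The plan is to run the construction of Section~\ref{s:even_d} essentially verbatim, but on the explicit surface $Y$ and with the role played there by the transcendental lattice (which over $\CC$ represented every even integer) now taken over by the rank-$4$ sublattice $\langle 2Q, 2Q', P+Q+Q', 2P'\rangle\subseteq\MWL^0(Y)$ carrying the form $A$. Since $Y$ together with all its sections is given by explicit Weierstrass data that stays non-degenerate in every characteristic $p\neq 2,3$, the required Picard lattice is realised geometrically; hence no Noether--Lefschetz non-emptiness has to be established, and it suffices to check the lattice-theoretic input once, exactly as in the proof of Proposition~\ref{prop:even_h}.

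First I would start, as in Step~1, from the very ample divisor $H=NF+\tfrac{d}{2}(O+P_1+P_2+P_3)$, where $O,P_1,P_2,P_3$ are the four $2$-torsion sections of $Y$. Since $(O+P_1+P_2+P_3).\Theta_{i,j}=2$ on each of the twelve $\IK_2$ fibres, one gets $H.\Theta_{i,j}=d$ on all $24$ fibre components, together with $H^2=4Nd$, i.e.\ $h=2Nd$ even. To reach odd $h$ I would pass to $H'=H+D$ with $D=\varphi(R)$ for a section $R\in\MWL^0(Y)$. Because $R$ meets every $\IK_2$ fibre in the identity component, $D$ is integral, orthogonal to $\Triv(Y)$, and (once $-D^2>4$) preserves the full $\IK_2^{12}$ configuration, exactly the situation analysed in Step~2. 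Then $(H')^2=4Nd+D^2$, so $h'=2Nd-\tfrac{1}{2}h(R)$, where $h(R)=-D^2$ is the height of $R$.

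The heights I need are $h(R)=-D^2\in\{6,8,\ldots,4d+4\}$; writing $h(R)=2m$ these are the values $m\in\{3,\ldots,2d+2\}$, and since $d$ is even, $h'$ is odd precisely when $m$ is odd. These $2d$ consecutive values of $m$ realise every residue class modulo $2d$, in particular those giving odd $h'$, so for every target odd $h'\geq 2d(2d+3)-2$ there is a unique admissible $m$ with $m\equiv -h'\pmod{2d}$ and an integer $N=(h'+m)/(2d)>2d+3$. The crucial point is that each such height is genuinely attained inside $\MWL^0(Y)$: this is exactly the content of the lemma that $A/2$ represents every integer except $1$, for $A$ then represents every even integer $\geq 4$, and none of our target heights $2m$ equals $2$. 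This is precisely where the present case departs from Proposition~\ref{prop:even_h}, where the narrow lattice $\MWL^0(X)$ represented only multiples of $4$ and thereby forced $h$ even; the ability of $A/2$ to attain odd values $m$ is what unlocks odd $h$.

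It remains to confirm very ampleness of $H'$, which I would do by running through the conditions of Criterion~\ref{crit} verbatim as in Step~4 of Section~\ref{s:even_d}, now including $R$ among the sections; the numerics are identical and hold for $N>2d+3$. This produces a degree-$2h'$ model of $Y$ on which the $24$ curves $\Theta_{i,j}$ remain smooth rational of degree $d$, giving the proposition. I expect the only genuinely new work to be the bookkeeping of the previous paragraph: reconciling the admissible range $\{6,\ldots,4d+4\}$ of heights, which must avoid the degenerate values $-D^2=2,4$ flagged in Step~2, with the single exception $m=1$ of the representation lemma, and verifying that the odd residues modulo $2d$ required for odd $h'$ are all covered. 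The representation lemma itself, already established, is the real engine of the argument.
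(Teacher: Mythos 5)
Your proposal is correct and follows essentially the same route as the paper: the paper's proof of Proposition~\ref{prop:even_d} is precisely to rerun the Step~1--4 machinery of Section~\ref{s:even_d} on $Y$, replacing the lattice-enhancement step by actual narrow sections $R\in\MWL^0(Y)$ with $D=\varphi(R)$, and invoking the lemma that $A/2$ represents every integer except $1$ to realise the heights $-D^2\in\{6,8,\hdots,4d+4\}$. Your parity bookkeeping (odd $m$ yields odd $h'$ since $d$ is even, all residues modulo $2d$ are covered by the $2d$ consecutive values $m\in\{3,\hdots,2d+2\}$, and the exception $m=1$ is never needed) makes explicit exactly what the paper leaves implicit in its phrase ``by the same arguments as in the proof of Proposition~\ref{prop:even_h}.''
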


\subsection{Proof of Theorem \ref{thm}}

Theorem \ref{thm} can be derived in any characteristic $\neq 2,3$ by combining Propositions \ref{prop:even_h} and \ref{prop:even_d}.
\qed

\begin{rem}
We emphasize that the above explicit argument also covers the characteristic zero case.
The alternative proof of that case in Sections \ref{s:even_d}, \ref{s:odd_d}
was given because it appears more conceptual and direct.
\end{rem}

\section{Concluding Remarks} \label{sec-finalremarks}

Theorem \ref{thm} leaves the case open where both $d$ and $h$ are odd.
Here we comment on this briefly, without proofs and only over $\CC$.

\subsection{}
Part of the open case can be covered by considering elliptic K3 surfaces with 8 fibres of type $\IK_3$
(that is, quadratic base changes of the Hesse pencil).
Combining \cite{degt}, \cite[\S 11.2]{RS-24} and the approach from Section \ref{s:even_d},
one can show:

\begin{prop}
For any $d\in\NN$ and  $h'\gg 0$,
there are complex elliptic K3 surfaces with 8 fibres of type $\IK_3$
admitting  model of degree $6h'+2d^2$ with smooth fibre components featuring as degree $d$ curves.
\end{prop}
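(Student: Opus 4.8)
The plan is to adapt the approach from Section~\ref{s:even_d} to elliptic K3 surfaces arising from quadratic base changes of the Hesse pencil, whose fibres are of type $\IK_3$ rather than $\IK_2$. First I would recall from \cite[\S 11.2]{RS-24} that such a surface $Y'$ carries an elliptic fibration with $8$ fibres of type $\IK_3$, so that the trivial lattice becomes $\Triv(Y')=U\oplus A_2^8$ of rank $18$; the Mordell--Weil group then contains a torsion part $(\ZZ/3\ZZ)^2$ with three (plus the zero) sections $O,P_1,P_2,P_3$ permuting the components of each $\IK_3$ fibre. As in Step~1, the key numerical input is that each such section meets every $\IK_3$ fibre in a single component, so for a divisor of the form $H=NF+\text{(combination of the torsion sections)}$ one computes $H.\Theta=d$ for every fibre component $\Theta$, producing exactly $8\cdot 3=24$ smooth degree-$d$ rational curves (the fibre components), provided $N$ is large enough for very ampleness via Criterion~\ref{crit}.

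Second, I would carry out the degree bookkeeping. The intersection $F.\Theta$ and the self-intersection of the polarizing combination of sections force $H^2$ to land in a fixed residue class modulo some multiple of $6$; tracing through the Hesse-pencil geometry the natural polarization yields $H^2\equiv 2d^2 \pmod 6$, which is the source of the ``$6h'+2d^2$'' shape in the statement. To fill in the remaining residue classes one applies the lattice-enhancement/Noether--Lefschetz machinery exactly as in Steps~2--4: one chooses a primitive vector $D$ in the transcendental lattice $T(Y')$ with prescribed negative even self-intersection, realizes it as $D=\varphi(Q)$ for a narrow Mordell--Weil section $Q$, and sets $H'=H+D$. Verifying the three conditions of Criterion~\ref{crit} for $H'$ proceeds line by line as in Step~4, the only changes being the replacement of $A_1$ by $A_2$ data (so $H'.\Theta=d$ for each of the two components of an $A_2$ fibre) and the adjustment of the constant in the genus-one condition. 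Over $\CC$ the non-emptiness of the relevant Noether--Lefschetz locus is automatic, which is why the statement is phrased only over $\CC$.

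The main obstacle I anticipate is controlling which integers are \emph{representable} by the transcendental lattice $T(Y')$ after the quadratic base change, since this governs precisely which residue classes modulo $6$ (and hence which values of $h'$) can be reached. Because the $\IK_3$ fibres contribute $A_2$-summands rather than $A_1$-summands, the discriminant form is $3$-elementary rather than $2$-elementary, so the argument establishing the shape of $T(Y')$ (the analogue of \eqref{eq:T(X)}) must be redone via \cite{nikulin}, and the even-integer-representation fact used in Step~2 must be replaced by the corresponding representability statement for the new transcendental lattice. This is exactly the point at which only ``part of the open case'' is covered: the parity/divisibility constraint coming from the $A_2^8$ structure prevents reaching every odd $h$, which is why the proposition is stated for degrees of the form $6h'+2d^2$ rather than for all sufficiently large degrees, and why combining this with \cite{degt} (which supplies the line configurations on the Hesse-type surfaces) is needed to maximize the range covered.
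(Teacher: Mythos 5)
There is a genuine gap at the very first step, and it is precisely the point where the $\IK_3$ case differs from the $\IK_2$ case you are copying. On the quadratic base change of the Hesse pencil, the torsion group is $(\ZZ/3\ZZ)^2$, so there are \emph{nine} torsion sections (not ``three plus the zero section'' as you write), and at each $\IK_3$ fibre these nine sections distribute three-per-component according to the cosets of an order-$3$ subgroup $K_v\subset(\ZZ/3\ZZ)^2$ (the sections meeting the identity component). The four pairs of $\IK_3$ fibres realize the four \emph{distinct} order-$3$ subgroups as these kernels. Now suppose $H=NF+\sum_i m_iP_i$ with the $m_i$ supported on torsion sections and $H.\Theta=d$ for all $24$ components; this says that for every one of the four subgroups $K$, all coset sums of the weight function $m$ equal $d$. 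A short character-sum argument shows this forces $m$ to be constant, i.e.\ $m_i=d/3$ for all $i$ --- so your Step 1 only produces a valid $H$ when $3\mid d$, contradicting the claim for arbitrary $d\in\NN$. Your subsequent enhancement $H'=H+D$ cannot repair this: $D=\varphi(Q)$ lies in $\Triv(W)^\perp$, hence $D.\Theta=0$ and $H'.\Theta=H.\Theta$, so the component degrees are frozen at Step 1. Relatedly, your congruence $H^2\equiv 2d^2\pmod 6$ is asserted but never computed, and with torsion sections alone it is unattainable for $3\nmid d$ (when $3\mid d$ one gets $H^2=6Nd-2d^2\equiv 0\pmod 6$, which is consistent but uninformative).

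The fix is the mechanism of Section~\ref{s:odd_d}, not of Section~\ref{s:even_d}: the non-torsion section must enter the \emph{polarization itself}, not merely the Noether--Lefschetz enhancement. Enhance the $8\times\IK_3$ family by a section $Q$ meeting every $\IK_3$ fibre in the same non-identity component; then its inverse $\ominus Q$ meets the other non-identity component, so $L=O+Q+(\ominus Q)$ meets each of the $24$ components exactly once, and $H=NF+dL$ has $H.\Theta=d$ for \emph{all} $d$. Writing $a=(Q.O)$, the zero-height relation gives $(Q.\ominus Q)=4a-2$, whence $L^2=12a-10$ and $H^2=6Nd+d^2(12a-10)\equiv 2d^2\pmod 6$ --- this computation, which your proposal skips, is exactly the source of the ``$6h'+2d^2$'' in the statement. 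A second enhancement by $D'\in T$ with $D'^2\in 6\ZZ_{<0}$ (here the transcendental lattice of the $8\times\IK_3$ family is $3$-elementary, essentially $U(3)^2$, and represents only multiples of $6$ --- your instinct about redoing the representability analysis via \cite{nikulin} was correct, and this divisibility is why only one residue class mod $6$ per $d$ is reached), together with varying $N$ and $a$ and a Criterion~\ref{crit} check as in Step 4, then covers all $h'\gg0$. Note finally that the paper states this proposition explicitly \emph{without proof}, so the comparison here is with the strategy it indicates (``combining \cite{degt}, \cite[\S 11.2]{RS-24} and the approach from Section~\ref{s:even_d}''); your blind attempt identifies the right ingredients but assembles them in a way that fails for all $d$ not divisible by $3$.
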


\subsection{}
For degree $d>1$, 
elliptic K3 surfaces with 6 fibres of type $\IK_4$, as in \cite[\S 11.1]{RS-24}, 
do not yield anything additional compared to Theorem \ref{thm}.

\subsection{}
For  $h$ large compared to $d$, there are no K3 surfaces with exactly 23 smooth rational curves of degree $d$,
by the same arguments as in the line case in  \cite{degt}, thanks to the general bound from \cite{RS-24}.

\subsection{}
Constructing K3 surfaces containing 22 smooth rational curves (for large $h$)
is surprisingly subtle. In fact, for odd $d$ and $h$, we could not cover any additional values
compared with the examples extracted from \cite{degt}.

%
%
%

\subsection{}
\label{ss:2,3}

In characteristics $2, 3$, there are different bounds due to the occurrence of quasi-elliptic fibrations, see \cite{RS-24}.
For K3 surfaces of finite height, however, and for elliptic fibrations in general, Miyaoka's bound carries over,
but then some of the above constructions degenerate,
so the problem is decidedly more subtle.

\end{document}